\documentclass[11pt]{article}
\usepackage{latexsym,amssymb,amsmath,amsfonts,amsthm}
\usepackage{graphics,graphicx,mathrsfs,subfigure}
\usepackage{color,xspace}
\newcommand{\be}{\begin{eqnarray}}
\newcommand{\ben}{\begin{eqnarray*}}
\newcommand{\en}{\end{eqnarray}}
\newcommand{\enn}{\end{eqnarray*}}

\newtheorem{theorem}{Theorem}[section]
\newtheorem{lemma}[theorem]{Lemma}

\newtheorem{remark}[theorem]{Remark}

\definecolor{rot}{rgb}{1,0,0}
\definecolor{hw}{rgb}{0,0,1}

\begin{document}
\renewcommand{\theequation}{\arabic{section}.\arabic{equation}}
\title{\bf

A refinement of the Bukhgeim-Klibanov method
 
}
\author{ Suliang Si\thanks{School of Mathematics and Statistics, Shandong University of Technology,
Zibo, 255000, China ({\tt sisuliang@amss.ac.cn})}}
\date{}

 



\maketitle

\begin{abstract}
 In this article, we improve the classical Bukhgeim-Klibanov method presented in \cite{BK1981}, which can be used to prove the conditional stability of inverse source problem for a hyperbolic equation from the measurement on the subboundary. A major ingredient of our proof is a novel Carleman estimate. This inequality eliminates the need to extend the solution in time, therefore simplifies the existing proofs, which is widely applicable to various evolution equations.
 
\end{abstract}

%


\section{Introduction}

Let $\Omega\subset\mathbb{R}^n$ be a  bounded domain with $C^2$-boundary $\partial\Omega$, $n \geq 1$. We consider the wave equation
\begin{equation}\label{u}
\begin{cases}
\partial_t^2 u(x,t) - \Delta u(x,t) + q(x) u(x,t) = 0 & (x,t)\in \Omega \times (0, T), \\
u(x,0) = u_0(x), \quad \partial_t u(x,0) = u_1(x) & x\in \Omega,\\
u(x,t) = h(x,t) & (x,t)\in \partial\Omega \times (0, T), \\
\end{cases}
\end{equation}
Here $u$ denotes the amplitude of the waves, $q$ is a potential,
$h$ is a boundary term  and $(u_0, u_1)$ are the initial data.
 
Define, for $m > 0$, the space
\[
L_{\leq m}^{\infty}(\Omega) = \bigl\{ q \in L^{\infty}(\Omega),\ \|q\|_{L^{\infty}(\Omega)} \leq m \bigr.\}.
\] 
It is known that initial boundary value problem (\ref{u}) is well-posed. Indeed, if we assume $(u_0, u_1) \in H^2(\Omega)\times H^1(\Omega)$, $h\in H^2(0,T;H^2(\Omega))$ and $q\in L_{\leq m}^{\infty}(\Omega)$,  there exists a unique solution \(u=u_q\) to  (\ref{u}) such that 
\[u \in C^1([0,T];H^2(\Omega)) \cap C^2([0,T];H^1(\Omega)).\]
Assume there exists $ x_0 \notin \overline{\Omega}$, such that $\Gamma_0 \supset \{ x \in \partial\Omega,\ (x - x_0) \cdot \nu(x) \geq 0 \}$
and
\[T_0= \sup_{x\in\Omega} |x - x_0|.\]
In (\ref{u}), assume that \(u_0\), \(u_1\) and \(h\) are given, the inverse problem is to estimate $q_1 - q_2$ in a suitable norm by $\partial_\nu u_{q_1}(x,t) - \partial_\nu u_{q_2}(x,t)$ on $\Gamma_0\times(0,T)$.
This question has all already received positive answers.
The following theorem is a classical result in the theory of  inverse coefficient problems.
\begin{theorem}\label{Thm}
Let $T > T_0$ and $(u_0, u_1) \in H^2(\Omega)\times H^1(\Omega)$ , $h\in H^2(0,T;H^2(\Omega))$. We assume that there exist constant $M_0,\  m_0 > 0$ such that
\begin{equation}
|u_0(x)| \geq m_0 > 0, \quad x \in \Omega, \quad \text{and} \quad \|u_{q_2}\|_{H^1(0,T;L^\infty(\Omega)} \leq M_0.
\end{equation}
Then there exists a constant $C > 0$ such that
\[
\|q_1 - q_2\|_{L^2(\Omega)} \leq C \left\| \partial_\nu \partial_t u_{q_1} - \partial_\nu \partial_t u_{q_2} \right\|_{L^2(\Gamma_0\times(0,T))}, \quad q_1, q_2 \in L_{\leq m}^{\infty}(\Omega).
\]
Here the constant $C$ is dependent on $\Omega$, $T$, $m_0$, $M_0$.
\end{theorem}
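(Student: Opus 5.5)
We will run the Bukhgeim--Klibanov argument directly on $(0,T)$, without extending in time; this will rest on a Carleman estimate whose weight is largest at $t=0$. Set $y=u_{q_1}-u_{q_2}$ and $f=q_2-q_1$. Then
\[
\partial_t^2 y-\Delta y+q_1 y = f(x)\,u_{q_2}(x,t),\qquad y(\cdot,0)=\partial_t y(\cdot,0)=0,\qquad y|_{\partial\Omega\times(0,T)}=0 .
\]
Differentiate in time and put $z=\partial_t y$. This gives
\[
\partial_t^2 z-\Delta z+q_1 z=f\,\partial_t u_{q_2},\qquad z(\cdot,0)=0,\qquad \partial_t z(\cdot,0)=f\,u_0 ,
\]
where the last identity comes from evaluating the equation for $y$ at $t=0$. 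The hypothesis $|u_0|\ge m_0$ gives $|f|\le m_0^{-1}|\partial_t z(\cdot,0)|$. The hypothesis on $u_{q_2}$ controls the source by $M_0|f|$ in $L^2(0,T;L^2(\Omega))$ norms. The remaining regularity of $z$ follows from well-posedness.

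The key tool is a Carleman estimate on $Q=\Omega\times(0,T)$ with a weight of the form $\varphi(x,t)=e^{\lambda\psi}$, $\psi=|x-x_0|^2-\beta t^2$, where $0<\beta<1$ is chosen so that $\beta T^2>T_0^2$. This is possible since $T>T_0$. The estimate we aim for is the following. For $w$ with $w|_{\partial\Omega}=0$, $w(\cdot,0)=0$ and $s$ large,
\[
s\!\int_Q (|\nabla_{x,t}w|^2+s^2|w|^2)e^{2s\varphi}
+ s\!\int_\Omega |\partial_t w(x,0)|^2 e^{2s\varphi(x,0)}dx
\le C\!\int_Q |Pw|^2e^{2s\varphi}+Cs\!\int_{\Gamma_0\times(0,T)}|\partial_\nu w|^2e^{2s\varphi}+ \text{(terms at } t=T).
\]
To prove it, we conjugate $v=e^{s\varphi}w$ and split $P_s$ into symmetric and antisymmetric parts. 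Expanding $\|P_sv\|^2$ and integrating by parts in $x$ and $t$ then gives two kinds of boundary terms.

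1. On $\partial\Omega$ the boundary terms carry the factor $(x-x_0)\cdot\nu$. They are nonnegative off $\Gamma_0$, so only $\Gamma_0$ survives.
2. At $t=0$ we have $\partial_t\psi=0$ and $w=0$. The surviving term is $s\lambda\varphi\,|\partial_t v|^2$ with a \emph{favorable} sign, once the sign conventions and the choice of $\psi$ are arranged suitably. Producing this positive $t=0$ term, rather than needing an even extension to $(-T,T)$, is the novelty.

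At $t=T$ we have $\psi(x,T)\le T_0^2-\beta T^2<0$, whereas $\psi(x,0)\ge d_0^2>0$ with $d_0=\operatorname{dist}(x_0,\Omega)$. Hence the $t=T$ terms are bounded by $Ce^{2s e^{\lambda(T_0^2-\beta T^2)}}\times$(energy). Pseudo-convexity of $\psi$ in the interior requires $\beta<1$ and $\lambda$ large, as in the standard case.

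The main obstacle will be step 2 of this derivation: controlling the sign and size of the $t=0$ term, and checking that the cross terms there (those involving $\nabla w$ and $w$) vanish because $w(\cdot,0)=0$. We will compute this first. If the direct sign fails, the fallback is to apply the estimate with an extra cutoff only near $t=T$ and absorb the $t=0$ energy through the identity $\partial_t z(\cdot,0)=fu_0$.

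Next we apply the estimate to $z$, after multiplying by a cutoff $\chi(t)$ equal to $1$ near $0$ and vanishing near $T$. The commutator terms live where $\psi<0$, so they are controlled by $e^{2s\cdot\text{small}}\|f\|^2$, using the energy estimate $\|z\|_{H^1(Q)}\le C\|f\|_{L^2}$. The source term is bounded by
\[
C\int_Q |f|^2|\partial_tu_{q_2}|^2e^{2s\varphi}\le CM_0^2\int_\Omega |f|^2e^{2s\varphi(x,0)}dx ,
\]
since $\varphi(x,t)\le\varphi(x,0)$. The left-hand side contains $s\int_\Omega m_0^2|f|^2e^{2s\varphi(x,0)}$. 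Taking $s$ large absorbs the source term, which is of order $s^0$ against order $s^1$.

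Finally, the cutoff errors are at most $Ce^{2s\delta}\|f\|^2$ with $\delta<\min_x\varphi(x,0)$, so they are absorbed as well. What remains is
\[
\int_\Omega|f|^2e^{2s\varphi(x,0)}\le C e^{Cs}\|\partial_\nu\partial_t y\|_{L^2(\Gamma_0\times(0,T))}^2 ,
\]
which gives the theorem after fixing $s$.
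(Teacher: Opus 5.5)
The step you flag as the main obstacle is where the proposal breaks, and it is exactly the step that carries the paper's new idea. Write, as the paper does, $w=e^{s\varphi}v$ for the conjugated function; this is the reverse of your notation.

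\textbf{Why the $t=0$ term does not appear.} In the expansion of $\int_0^T\!\int_\Omega P_1w\,P_2w$, every boundary contribution at $t=0$ carries one of three factors: $\partial_t\varphi(\cdot,0)=\lambda\varphi\,\partial_t\psi(\cdot,0)$, a trace of $w(\cdot,0)$, or a trace of $\nabla w(\cdot,0)$. For instance, $\int_0^T\!\int_\Omega\partial_t^2w\,(-2s\,\partial_t\varphi\,\partial_tw)$ yields $s\int_\Omega\partial_t\varphi(x,0)|\partial_tw(x,0)|^2dx$. The terms built from $2s\nabla\varphi\cdot\nabla w$, from $-\Delta w$, and from the zero-order parts yield $t=0$ traces containing $\nabla w$ or $w$. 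Take $\psi=|x-x_0|^2-\beta t^2$ and $v(\cdot,0)=0$, so that $w(\cdot,0)=\nabla w(\cdot,0)=0$. Then all of these contributions vanish, and there is no surviving $s\lambda\varphi|\partial_tw(0)|^2$. Your own observation $\partial_t\psi(\cdot,0)=0$ is precisely what kills it.

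Tilting a fixed weight does not help. The $t=0$ term has the favorable sign only if $\partial_t\psi(\cdot,0)>0$. But then $\varphi(x,t)>\varphi(x,0)$ for small $t>0$, and your absorption step needs the source term bounded by $\int_\Omega|f|^2e^{2s\varphi(x,0)}dx$. That bound is lost by a factor exponentially large in $s$.

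The fallback does not supply the term either. A cutoff near $t=T$ creates nothing at $t=0$. The identity $\partial_tz(\cdot,0)=fu_0$ is useful only once $\partial_tz(\cdot,0)$ already sits on the left-hand side. As written, your estimate on $(0,T)$ is just the standard one with $t=T$ terms. It controls nothing at $t=0$ and so gives no bound on $f$.

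\textbf{The missing device.} You have to create the $t=0$ term by hand: add $-s^{1/2}\partial_tw$ to the antisymmetric part $P_2$ and subtract it in the remainder. The pieces then behave as follows.
\begin{itemize}
\item $\int_0^T\!\int_\Omega\partial_t^2w\,(-s^{1/2}\partial_tw)=\frac{s^{1/2}}{2}\int_\Omega|\partial_tw(0)|^2-\frac{s^{1/2}}{2}\int_\Omega|\partial_tw(T)|^2$, where $\partial_tw(\cdot,0)=e^{s\varphi(\cdot,0)}\partial_tv(\cdot,0)$.
\item The $-\Delta w$ part gives only a $t=T$ trace, since $\nabla w(\cdot,0)=0$.
\item The $s^2\lambda^2\varphi^2w$ part gives $t=T$ terms plus $O(s^{5/2}\lambda^3)\int\!\int\varphi^2|w|^2$. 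This is absorbed by $s^3\lambda^3\int\!\int\varphi^3|w|^2$.
\item The price $\|s^{1/2}\partial_tw\|^2=s\int\!\int|\partial_tw|^2$ is absorbed by $s\lambda\int\!\int\varphi|\partial_tw|^2$ for $\lambda$ large. For this, first normalize $\psi\ge1$ by adding a constant, as the paper does.
\end{itemize}
Equivalently, after the standard estimate on $(0,T)$, write $\int_\Omega|\partial_tw(0)|^2=\int_\Omega|\partial_tw(T)|^2-2\int_0^T\!\int_\Omega\partial_tw\,\partial_t^2w$ and replace $\partial_t^2w$ using $P_1w$.

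The gain is $s^{1/2}$, not the $s$ you claim. It still beats the $O(1)$ source term, because $\varphi(x,t)\le\varphi(x,0)$; the paper's $k(s)=o(1)$ gives even more room.

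With this step inserted, the rest of your plan is sound: the reduction to $z=\partial_ty$, the use of $|u_0|\ge m_0$, the cutoff near $t=T$ controlled by the energy estimate, and the final absorption. Your handling of the $t=T$ terms is more explicit than the paper's, which simply defers to the literature.
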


The inverse coefficient problem under consideration
has been well-studied in the literature, starting with the uniqueness result in the
celebrated article \cite{BK1981}, 
Bukhgeim and Klibanov  proposed a fundamental method based on
 a global Carleman estimate. Thus Carleman estimates became a fundamental
tool for establishing uniqueness  for inverse coefficient problems.

 Later on, stability issues were obtained for the wave equation, first based
on the observability properties of the wave equation \cite{PY96, PY97} and then refined
with the use of Carleman estimates, among which are \cite{IY01a, IY01b, IY03, KY06}.

In fact, a great part of the literature in this area, concerning uniqueness, stability,
and reconstruction of coefficient inverse problems for evolution partial differential
equations, can be found in \cite{Beilina2012, Bellassoued2017, BBE17, Fu2019, HIY, Imanuvilov1998, Imanuvilov2001, Imanuvilov2001a, Imanuvilov2003, JLY2017,K2002,Y1995,Y1999,PY96,PY97,IY01a,IY01b,IY03,KY06}, and we refer the interested
reader to it. 

In this paper, our main innovation is to present a new proof of the theorem. Existing methods all require extending the solution to \((-T,T)\) followed by integration by parts. Here, we perform integration by parts directly on \((0,T)\).

\section{Carleman estimates}
Let \[\psi(x,t) = |x - x_0|^2 - \beta t^2 + \beta_0, \quad \beta\in(0,1)\]
where $\beta_0 > 0$ is chosen such that $\psi \geq 1$ in $\Omega \times (0, T)$.
We define the weight function
\[\varphi(x,t)=e^{\lambda \psi(x,t)},\]
where \(\lambda>0\) is a second large parameter. 
Throughout this article, \(M>0\) denote generic constants which are independent of
parameter \(s\).

The Carleman estimate is a fundamental
tool for establishing uniqueness and stability for inverse problems. 
A Carleman estimate is an $L^2$-weighted estimate for the wave operator, and is stated
as follows:
\begin{lemma}\label{Car}
Let $v(x, 0) = 0$ for all $x \in \Omega$.
Then there exist $s_0 > 0$, $\lambda > 0$ and a positive constant $M$ such that for all $s \geq s_0$:
\begin{equation}\label{ine}
\begin{aligned}
&s^{1/2} \int_\Omega e^{2s\varphi(0)} |\partial_t v(0)|^2 dx + s \int_{0}^T \int_\Omega e^{2s\varphi} \left( |\partial_t v|^2 + |\nabla v|^2 \right) dxdt + s^3 \int_{0}^T \int_\Omega e^{2s\varphi} |v|^2 dxdt \\
&\leq M \int_{0}^T \int_\Omega e^{2s\varphi} |\partial_t^2v-\Delta v+qv|^2 dxdt + M s \int_{0}^T \int_{\Gamma_0} e^{2s\varphi} |\partial_\nu v|^2 d\sigma dt\\
&+Ms  \int_\Omega e^{2s\varphi(T)} \left( |\partial_t v(T)|^2 + |\nabla v(T)|^2 \right) dx + s^3 \int_\Omega e^{2s\varphi(T)} |v(T)|^2 dx,
\end{aligned}
\end{equation}
for all $v \in C^1((0,T); H^1(\Omega))$ satisfying $(\partial_t^2-\Delta) v \in L^2(\Omega \times (0,T))$, $\partial_\nu v \in L^2(\partial\Omega \times (0,T))$.
\end{lemma}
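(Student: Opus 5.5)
We follow the standard route to hyperbolic Carleman estimates: conjugate with the weight, split the conjugated operator into symmetric and antisymmetric parts, expand the cross term, and integrate by parts directly on $\Omega\times(0,T)$, keeping every time-boundary term. Since the zeroth-order term satisfies $\int e^{2s\varphi}|qv|^2\le m^2\int e^{2s\varphi}|v|^2$, it is absorbed by the $s^3$ term once $s$ is large. We may therefore take $q=0$ and write $P=\partial_t^2-\Delta$. Set $w=e^{s\varphi}v$ and $Lw=e^{s\varphi}P(e^{-s\varphi}w)$. A direct computation gives
\[
Lw=\partial_t^2w-\Delta w+s^2\left(|\partial_t\varphi|^2-|\nabla\varphi|^2\right)w-2s\left(\partial_t\varphi\,\partial_tw-\nabla\varphi\cdot\nabla w\right)-s\left(\partial_t^2\varphi-\Delta\varphi\right)w .
\]
We write $Lw=P_1w+P_2w+Rw$, where
\[
P_1w=\partial_t^2w-\Delta w+s^2\left(|\partial_t\varphi|^2-|\nabla\varphi|^2\right)w,\qquad
P_2w=-2s\left(\partial_t\varphi\,\partial_tw-\nabla\varphi\cdot\nabla w\right)+c\,s\,\lambda^2\varphi\, w ,
\]
with $c$ chosen as usual and $R$ a lower-order remainder. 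We then use
\[
\|Lw-Rw\|^2\ge 2\int_0^T\!\!\int_\Omega P_1w\,P_2w\,dx\,dt,
\]
and expand the cross term.

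The interior terms follow the classical computation (as in Imanuvilov and Imanuvilov–Yamamoto). Because $\beta<1$ and $\nabla\psi=2(x-x_0)$ with $|x-x_0|$ bounded below (recall $x_0\notin\overline\Omega$), choosing $\lambda$ large gives a positive lower bound of the form
\[
s\lambda\varphi\left(|\partial_tw|^2+|\nabla w|^2\right)+s^3\lambda^4\varphi^3|w|^2 .
\]
A technical point is the region where $|\partial_t\varphi|\approx|\nabla\varphi|$. Because $T>T_0$ is not used here, and we have the freedom to choose $\beta$, we handle this with the standard convexity argument: pick $\beta$ close to $1$ relative to $\lambda$, or use the pseudoconvexity splitting.

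The spatial boundary terms involve $\partial_\nu\varphi\,|\partial_\nu w|^2$, where $\partial_\nu\varphi=2\lambda\varphi\,(x-x_0)\cdot\nu$. This quantity is $\le0$ off $\Gamma_0$, so those contributions have a good sign. On $\Gamma_0$ they are bounded by $Ms\int_{\Gamma_0}e^{2s\varphi}|\partial_\nu v|^2$; here we use that $v|_{\partial\Omega}=0$ is implicitly assumed, so that $\nabla w=\partial_\nu w\,\nu$. Finally we return from $w$ to $v$ in the usual way, using $|\nabla w|^2\ge \tfrac12 e^{2s\varphi}|\nabla v|^2-Cs^2\lambda^2\varphi^2e^{2s\varphi}|v|^2$.

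The main obstacle, and the new point, is the time-boundary terms. At $t=T$ we simply bound them in absolute value by
\[
Ms\int_\Omega e^{2s\varphi(T)}\left(|\partial_tv(T)|^2+|\nabla v(T)|^2\right)dx+Ms^3\int_\Omega e^{2s\varphi(T)}|v(T)|^2dx,
\]
which is exactly what appears on the right of (\ref{ine}). At $t=0$ we use two facts. First, $w(\cdot,0)=0$, hence $\nabla w(\cdot,0)=0$. Second, $\partial_t\varphi(0)=-2\beta\lambda t\,\varphi=0$. We then collect the surviving terms. The term $\int_\Omega\partial_tw\,(\cdot)$ coming from $\partial_t^2w\cdot P_2w$ vanishes at $t=0$ because of the factor $\partial_t\varphi$. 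So we need to extract a positive trace from another source, namely the term $c\,s\lambda^2\varphi\,w$ paired with $\partial_t^2w$, and from the $\partial_t\varphi$-derivative contributions. This generates only lower powers of $s$. For this reason, and because of the $|\partial_tv(0)|^2$ term, we expect the $s^{1/2}$ power rather than $s$.

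Concretely, we would add the auxiliary identity
\[
\int_\Omega e^{2s\varphi(0)}|\partial_tv(0)|^2dx=-\int_0^T\frac{d}{dt}\left(\int_\Omega e^{2s\varphi}|\partial_tv|^2dx\right)\eta\,dt,
\]
with a cutoff $\eta(0)=1$ and $\eta(T)$ handled as above. Expanding the derivative gives $2\int e^{2s\varphi}\partial_tv\,\partial_t^2v$ plus a weight term. We substitute $\partial_t^2v=Pv+\Delta v$ and integrate by parts in $x$. Then we use Cauchy–Schwarz with parameter $s^{1/2}$, namely $s^{1/2}|\partial_tv||Pv|\le |Pv|^2+s|\partial_tv|^2$. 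The right side is then controlled by the already established bulk estimate. If the boundary terms at $t=0$ from the main identity instead carry a sign we cannot control, this same auxiliary estimate is what we would use to absorb them.
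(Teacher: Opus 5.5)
Your handling of the interior positivity, the lateral boundary and the $t=T$ terms follows the same scheme as the paper. You also correctly note that every $t=0$ term of the standard cross product $\iint P_1w\,P_2w$ vanishes, because $w(0)=\nabla w(0)=0$ and $\partial_t\varphi(0)=0$. For the same reason, your first candidate source (pairing $cs\lambda^2\varphi w$ with $\partial_t^2w$) yields nothing at $t=0$.

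\textbf{The gap is in the auxiliary identity.} You multiply it by $s^{1/2}$, substitute $\partial_t^2v=Pv+\Delta v$ and integrate by parts against $e^{2s\varphi}$. Besides the term $-2s^{1/2}\iint e^{2s\varphi}\partial_tv\,Pv$, which your Cauchy--Schwarz step handles, this produces the weight-derivative terms
\[
-2s^{3/2}\int_0^T\!\!\int_\Omega \partial_t\varphi\,e^{2s\varphi}\bigl(|\partial_tv|^2+|\nabla v|^2\bigr)\,dxdt
\quad\text{and}\quad
4s^{3/2}\int_0^T\!\!\int_\Omega e^{2s\varphi}\,\partial_tv\,\nabla\varphi\cdot\nabla v\,dxdt .
\]
Since $\partial_t\varphi=-2\beta\lambda t\varphi\le 0$, the first term enters the upper bound for $s^{1/2}\int_\Omega e^{2s\varphi(0)}|\partial_tv(0)|^2dx$ with a positive sign, so no sign argument helps. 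Both terms are of order $s^{3/2}\iint e^{2s\varphi}(|\partial_tv|^2+|\nabla v|^2)$. The bulk estimate controls only $s\iint e^{2s\varphi}(|\partial_tv|^2+|\nabla v|^2)$, which is the natural power for hyperbolic Carleman estimates.

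Hence your argument closes only without the factor $s^{1/2}$: it gives $\int_\Omega e^{2s\varphi(0)}|\partial_tv(0)|^2dx\le M(\cdots)$, not the stated estimate. That weaker trace would still suffice for Theorem \ref{Thm}, since $k(s)\to0$ there, but it does not prove the lemma. Your starting inequality $\|Lw-Rw\|^2\ge 2\iint P_1wP_2w$ also discards exactly the quantity needed to repair this.

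\textbf{The missing idea} is to let $s^{1/2}\partial_tw$ meet only the symmetric part $P_1w=\partial_t^2w-\Delta w+s^2(|\partial_t\varphi|^2-|\nabla\varphi|^2)w$, against which it is an exact time derivative. Using $w(0)=0$ and $w|_{\partial\Omega}=0$,
\[
\int_0^T\!\!\int_\Omega P_1w\,(-s^{1/2}\partial_tw)\,dxdt=\frac{s^{1/2}}{2}\int_\Omega|\partial_tw(0)|^2dx+(\text{terms at }t=T)+O\Bigl(s^{5/2}\lambda^3\int_0^T\!\!\int_\Omega\varphi^2|w|^2dxdt\Bigr),
\]
with $\partial_tw(0)=e^{s\varphi(0)}\partial_tv(0)$. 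The last term is absorbed by the $s^3\lambda^3\varphi^3|w|^2$ bulk term. The first-order conjugation terms, which cause your $s^{3/2}$ losses, are never paired with $s^{1/2}\partial_tw$.

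The paper implements this by inserting $-s^{1/2}\partial_tw$ into $P_2$ and compensating for it in the remainder. This costs only $s\iint|\partial_tw|^2\le\lambda^{-1}\,s\lambda\iint\varphi|\partial_tw|^2$, which is absorbed for $\lambda$ large. Equivalently, within your framework, keep $\|P_1w\|^2$ in the identity $\|P_1w\|^2+\|P_2w\|^2+2(P_1w,P_2w)=\|Lw-Rw\|^2$ and bound $2s^{1/2}\bigl|\iint P_1w\,\partial_tw\bigr|\le\|P_1w\|^2+s\|\partial_tw\|^2$.
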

Lemma \ref{Car} is a classical Carleman estimate.  The main step of the proof is to first take the even extension of \(u\) to \((-T, T)\), followed by integration by parts in \(\Omega\times(-T,T)\). 
We now present a new proof. We take integration by parts directly in \(\Omega\times(0,T)\), without extending \(u\) of (\ref{u}) to \((-T, T)\). we only need \(u(x,0)=0\) for \(x\in\Omega\), not \(\partial_tu(x,0)=0\), \(x\in\Omega\).
If \(T\) is large enough, the terms of Lemma \ref{u} at times \(t=T\)
 can be removed, which can be found in \cite{BBE13}, we omit it.
\begin{remark}
Let $v(x, 0) = 0$ for all $x \in \Omega$. If \(T > \sup\limits_{x\in\Omega} |x - x_0|\), 
then there exist $s_0 > 0$, $\lambda > 0$ and a positive constant $M$ such that for all $s \geq s_0$:
\begin{equation}
\begin{aligned}
&s^{1/2} \int_\Omega e^{2s\varphi(0)} |\partial_t v(0)|^2 dx + s \int_{0}^T \int_\Omega e^{2s\varphi} \left( |\partial_t v|^2 + |\nabla v|^2 \right) dxdt + s^3 \int_{0}^T \int_\Omega e^{2s\varphi} |v|^2 dxdt \\
&\leq M \int_{0}^T \int_\Omega e^{2s\varphi} |\partial_t^2v-\Delta v+qv|^2 dxdt + M s \int_{0}^T \int_{\Gamma_0} e^{2s\varphi} |\partial_\nu v|^2 d\sigma dt
\end{aligned}
\end{equation}
for all $v \in C^1((0,T); H^1(\Omega))$ satisfying $(\partial_t^2-\Delta) v \in L^2(\Omega \times (0,T))$, $\partial_\nu v \in L^2(\partial\Omega \times (0,T))$.
More generally, we have
\begin{equation}
\begin{aligned}
s^{1/2} \int_\Omega e^{2s\varphi(0)} |\partial_t v(0)|^2 dx 
\leq M \int_{0}^T \int_\Omega e^{2s\varphi} |\partial_t^2v-\Delta v+qv|^2 dxdt + M s \int_{0}^T \int_{\Gamma_0} e^{2s\varphi} |\partial_\nu v|^2 d\sigma dt
\end{aligned}
\end{equation}
\end{remark}

\begin{proof}

It is sufficient to prove Lemma \ref{Car} in the case where \(q\equiv 0\). Indeed, we assume that we already established
the inequality 
\begin{equation}\label{ine2}
\begin{aligned}
&s^{1/2} \int_\Omega e^{2s\varphi(0)} |\partial_t v(0)|^2 dx + s \int_{0}^T \int_\Omega e^{2s\varphi} \left( |\partial_t v|^2 + |\nabla v|^2 \right) dxdt + s^3 \int_{0}^T \int_\Omega e^{2s\varphi} |v|^2 dxdt \\
&\leq M \int_{0}^T \int_\Omega e^{2s\varphi} |\partial_t^2v-\Delta v|^2 dxdt + M s \int_{0}^T \int_{\Gamma_0} e^{2s\varphi} |\partial_\nu v|^2 d\sigma dt\\
&+Ms  \int_\Omega e^{2s\varphi(T)} \left( |\partial_t v(T)|^2 + |\nabla v(T)|^2 \right) dx + s^3 \int_\Omega e^{2s\varphi(T)} |v(T)|^2 dx.
\end{aligned}
\end{equation}
Since \(q\in L_{\leq m}^{\infty}(\Omega)\), we have
\begin{equation}
|\partial_t^2v-\Delta v|^2\leq |\partial_t^2v-\Delta v+qv-qv|^2\leq 2 |\partial_t^2v-\Delta v|^2+2m^2|v|^2.
\end{equation}
By choosing \(s\) large, we can absorb the term
\[\int_{0}^T \int_\Omega e^{2s\varphi} |v|^2 dxdt\]
into the left-hand side of the Carleman estimate (\ref{ine2}) with \(q=0\).

In order to prove the Carleman estimates, we set \[w=e^{s\varphi}v \quad \mbox{for all } (x,t)\in\Omega\times(0,T).\] 
Then, we introduce the conjugate operator $P$ defined by
\begin{equation}
Pw = e^{s\varphi} (\partial_t^2-\Delta) \left( e^{-s\varphi} w \right).
\end{equation}
Some easy computations give
\begin{equation}
\begin{aligned}
Pw &= \partial_t^2 w - 2s\lambda\varphi\left( \partial_t w \partial_t\psi - \nabla w \cdot \nabla\psi \right) + s^2\lambda^2\varphi^2 w\left( |\partial_t\psi|^2 - |\nabla\psi|^2 \right) - \Delta w \\
&\quad - s\lambda\varphi w\left( \partial_t^2\psi - \Delta\psi \right) - s\lambda^2\varphi w\left( |\partial_t\psi|^2 - |\nabla\psi|^2 \right)-s\partial_t w \\
&= P_1 w + P_2 w + R_1w+R_2w,
\end{aligned}
\end{equation}
where
\begin{align}
P_1 w=\partial_t^2 w- \Delta w+ s^2\lambda^2\varphi^2 w\left( |\partial_t\psi|^2 - |\nabla\psi|^2 \right),
\end{align}
\begin{align}
P_2 w &= (\alpha - 1)s\lambda\varphi w(\partial_t^2\psi - \Delta\psi) - s\lambda^2\varphi w(|\partial_t\psi|^2 - |\nabla\psi|^2) \notag \\
&\quad - 2s\lambda\varphi(\partial_t w\partial_t\psi - \nabla w\cdot\nabla\psi)-s\partial_t w, 
\end{align}
and
\begin{align}
R_1 w &= -\alpha s\lambda\varphi w(\partial_t^2\psi - \Delta\psi), \qquad
R_2 w=-s\partial_t w.
\end{align}
Let
\begin{equation}
\alpha \in \left( \frac{2\beta}{\beta + n}, \frac{2}{\beta + n} \right).
\end{equation}
Since we have
\begin{equation}
\int_{0}^T \int_\Omega \left( |P_1 w|^2 + |P_2 w|^2 \right) dxdt + 2 \int_{0}^T \int_\Omega P_1 w P_2 w dxdt = \int_{0}^T \int_\Omega |P w - R_1 w-R_2w|^2 dxdt, 
\end{equation}
the main part of the proof is then to bound from below the cross-term
\[
\int_{0}^T \int_\Omega P_1 w P_2 w dxdt=\sum_{k=1}^9 I_k.
\]
We calculate the six terms \(I_k\), \(k=1,2\cdot\cdot\cdot 9\) by integrating by parts with respect to  \((x,t)\).

Integrations by part in time give easily
\[
\begin{aligned}
I_{1} &= \int_{0}^T \int_\Omega \partial_t^2 w \left( (\alpha - 1)s\lambda\varphi w (\partial_t^2\psi - \Delta\psi) \right) dxdt \\
&=(\alpha - 1)s\lambda\int_{\Omega}\partial_t w w\varphi (\partial_t^2\psi - \Delta\psi) (T)dx-\frac{(\alpha - 1)s\lambda}{2}\int_{\Omega} |w|^2\partial_t\varphi (\partial_t^2\psi - \Delta\psi) (T)dx\\
&= (1 - \alpha)s\lambda \int_{0}^T \int_\Omega \varphi |\partial_t w|^2 (\partial_t^2\psi - \Delta\psi) dxdt \\
&\quad - \frac{(1 - \alpha)}{2} s\lambda^2 \int_{0}^T \int_\Omega \varphi |w|^2 \partial_t^2\psi (\partial_t^2\psi - \Delta\psi) dxdt \\
&\quad - \frac{(1 - \alpha)}{2} s\lambda^3 \int_{0}^T \int_\Omega \varphi |w|^2 |\partial_t\psi|^2 (\partial_t^2\psi - \Delta\psi) dxdt.
\end{aligned}
\]
Similarly, one has
\[
\begin{aligned}
I_{2} &= \int_{0}^T \int_\Omega \partial_t^2 w \left( -s\lambda^2\varphi w \left( |\partial_t\psi|^2 - |\nabla\psi|^2 \right) \right) dxdt \\
&=-s\lambda^2\int_\Omega \partial_t w  w\varphi(|\partial_t\psi|^2 - |\nabla\psi|^2)(T)dx+\frac{s\lambda^2}{2}\int_\Omega |w|^2\partial_t \left(\varphi(|\partial_t\psi|^2 - |\nabla\psi|^2)\right)(T)dx\\
&= s\lambda^2 \int_{0}^T \int_\Omega \varphi |\partial_t w|^2 \left( |\partial_t\psi|^2 - |\nabla\psi|^2 \right) dxdt - s\lambda^2 \int_{0}^T \int_\Omega \varphi |w|^2 |\partial_t^2\psi|^2 dxdt \\
&\quad - \left( 2 + \frac{1}{2} \right) s\lambda^3 \int_{0}^T \int_\Omega \varphi |w|^2 |\partial_t\psi|^2 \partial_t^2\psi dxdt + \frac{s\lambda^3}{2} \int_{0}^T \int_\Omega \varphi |w|^2 |\nabla\psi|^2 \partial_t^2\psi dxdt \\
&\quad - \frac{s\lambda^4}{2} \int_{0}^T \int_\Omega \varphi |w|^2 |\partial_t\psi|^2 \left( |\partial_t\psi|^2 - |\nabla\psi|^2 \right) dxdt
\end{aligned}
\]
and
\[
\begin{aligned}
I_{3} &= \int_{0}^T \int_\Omega \partial_t^2 w \left( -2s\lambda\varphi \left( \partial_t w \partial_t\psi - \nabla w \cdot \nabla\psi \right) \right) dxdt \\
&= -s\lambda \int_\Omega |\partial_tw|^2\varphi\partial_t\psi(T)dx+2s\lambda \int_\Omega \partial_tw \varphi\nabla w\cdot\nabla\psi(T)dx\\
&+ s\lambda \int_{0}^T \int_\Omega \varphi |\partial_t w|^2 \partial_t^2\psi \, dxdt + s\lambda^2 \int_{0}^T \int_\Omega \varphi |\partial_t w|^2 |\partial_t\psi|^2 \, dxdt \\
&\quad + s\lambda \int_{0}^T \int_\Omega \varphi |\partial_t w|^2 \Delta\psi \, dxdt + s\lambda^2 \int_{0}^T \int_\Omega \varphi |\partial_t w|^2 |\nabla\psi|^2 \, dxdt \\
&\quad - 2s\lambda^2 \int_{0}^T \int_\Omega \varphi \partial_t w \partial_t\psi \nabla w \cdot \nabla\psi \, dxdt.
\end{aligned}
\]
Furthermore, by Green’s formula and integration by parts, we obtain
\[
\begin{aligned}
I_{4} &= \int_{0}^T \int_\Omega -\Delta w \left( (\alpha - 1)s\lambda\varphi w (\partial_t^2\psi - \Delta\psi) \right) dxdt \\
&= -(1 - \alpha)s\lambda \int_{0}^T \int_\Omega \varphi |\nabla w|^2 (\partial_t^2\psi - \Delta\psi) dxdt \\
&\quad + \frac{(1 - \alpha)}{2} s\lambda^2 \int_{0}^T \int_\Omega \varphi |w|^2 \Delta\psi (\partial_t^2\psi - \Delta\psi) dxdt \\
&\quad + \frac{(1 - \alpha)}{2} s\lambda^3 \int_{0}^T \int_\Omega \varphi |w|^2 |\nabla\psi|^2 (\partial_t^2\psi - \Delta\psi) dxdt.
\end{aligned}
\]
On the other hand,
\[
\begin{aligned}
I_{5} &= \int_{0}^T \int_\Omega -\Delta w \left( -s\lambda^2\varphi w \left( |\partial_t\psi|^2 - |\nabla\psi|^2 \right) \right) dxdt \\
&= -s\lambda^2 \int_{0}^T \int_\Omega \varphi |\nabla w|^2 \left( |\partial_t\psi|^2 - |\nabla\psi|^2 \right) dxdt \\
&\quad - \frac{s\lambda^2}{2} \int_{0}^T \int_\Omega \varphi |w|^2 \Delta\left( |\nabla\psi|^2 \right) dxdt \\
&\quad + \frac{s\lambda^3}{2} \int_{0}^T \int_\Omega \varphi |w|^2 \Delta\psi \left( |\partial_t\psi|^2 - |\nabla\psi|^2 \right) dxdt \\
&\quad + \frac{s\lambda^4}{2} \int_{0}^T \int_\Omega \varphi |w|^2 |\nabla\psi|^2 \left( |\partial_t\psi|^2 - |\nabla\psi|^2 \right) dxdt \\
&\quad - s\lambda^3 \int_{0}^T \int_\Omega \varphi |w|^2 \nabla\psi \cdot \nabla\left( |\nabla\psi|^2 \right) dxdt.
\end{aligned}
\]
Using the fact that $w|_{\partial\Omega\times(0,T)} = 0$, $\nabla w = (\partial_\nu w)\nu$ and $|\nabla w|^2 = |\partial_\nu w|^2$ on $\partial\Omega\times(0,T)$, we obtain
\[
\begin{aligned}
I_{6} &= \int_{0}^T \int_\Omega -\Delta w \left( -2s\lambda\varphi \left( \partial_t w \partial_t\psi - \nabla w \cdot \nabla\psi \right) \right) dxdt \\
&=-s\lambda \int_\Omega|\nabla w|^2\varphi\partial_t\psi(T)dx\\
&+ s\lambda \int_{0}^T \int_\Omega \varphi |\nabla w|^2 (\partial_t^2\psi - \Delta\psi) dxdt + 2s\lambda^2 \int_{0}^T \int_\Omega \varphi |\nabla\psi \cdot \nabla w|^2 dxdt \\
&\quad - 2s\lambda^2 \int_{0}^T \int_\Omega \varphi \partial_t w \partial_t\psi \nabla w \cdot \nabla\psi dxdt + s\lambda^2 \int_{0}^T \int_\Omega \varphi |\nabla w|^2 \left( |\partial_t\psi|^2 - |\nabla\psi|^2 \right) dxdt \\
&\quad - s\lambda \int_{0}^T \int_{\partial\Omega} \varphi |\partial_\nu w|^2 \nabla\psi \cdot \nu \, d\sigma dt + 4s\lambda \int_{0}^T \int_\Omega \varphi |\nabla w|^2 dxdt.
\end{aligned}
\]
One easily writes
\[
\begin{aligned}
I_{7} &= \int_{0}^T \int_\Omega s^2\lambda^2\varphi^2 w \left( |\partial_t\psi|^2 - |\nabla\psi|^2 \right) \left( (\alpha - 1)s\lambda\varphi w (\partial_t^2\psi - \Delta\psi) \right) dxdt \\
&= (\alpha - 1)s^3\lambda^3 \int_{0}^T \int_\Omega \varphi^3 |w|^2 (\partial_t^2\psi - \Delta\psi) \left( |\partial_t\psi|^2 - |\nabla\psi|^2 \right) dxdt
\end{aligned}
\]
and
\[
\begin{aligned}
I_{8} &= \int_{0}^T \int_\Omega s^2\lambda^2\varphi^2 w \left( |\partial_t\psi|^2 - |\nabla\psi|^2 \right) \left( -s\lambda^2\varphi w \left( |\partial_t\psi|^2 - |\nabla\psi|^2 \right) \right) dxdt \\
&= -s^3\lambda^4 \int_{0}^T \int_\Omega \varphi^3 |w|^2 \left( |\partial_t\psi|^2 - |\nabla\psi|^2 \right)^2 dxdt.
\end{aligned}
\]
Finally, some integrations by part enable to obtain
\[
\begin{aligned}
I_{9} &= \int_{0}^T \int_\Omega s^2\lambda^2\varphi^2 w \left( |\partial_t\psi|^2 - |\nabla\psi|^2 \right) \left( -2s\lambda\varphi \left( \partial_t w \partial_t\psi - \nabla w \cdot \nabla\psi \right) \right) dxdt \\
&=-s^3\lambda^3\int_\Omega |w|^2\varphi^3 (|\partial_t\psi|^2 - |\nabla\psi|^2)\partial_t\psi(T)dx\\
&+ s^3\lambda^3 \int_{0}^T \int_\Omega \varphi^3 |w|^2 (\partial_t^2\psi - \Delta\psi) \left( |\partial_t\psi|^2 - |\nabla\psi|^2 \right) dxdt \\
&\quad + 2s^3\lambda^3 \int_{0}^T \int_\Omega \varphi^3 |w|^2 \left( \partial_t^2\psi |\partial_t\psi|^2 + 2|\nabla\psi|^2 \right) dxdt \\
&\quad + 3s^3\lambda^4 \int_{0}^T \int_\Omega \varphi^3 |w|^2 \left( |\partial_t\psi|^2 - |\nabla\psi|^2 \right)^2 dxdt
\end{aligned}
\]
and
\[
\begin{aligned}
I_{10}&=\int_{0}^T \int_\Omega P_1 w (-s^{1/2}\partial_t w) \, dxdt\\
&= \int_{0}^T \int_\Omega \left( \partial_t^2 w - \Delta w + s^2\lambda^2\varphi^2 w \left( |\partial_t\psi|^2 - |\nabla\psi|^2 \right) \right)(-s^{1/2} \partial_t w) \, dxdt \\
&= \frac{s^{1/2}}{2} \int_\Omega |\partial_t w(0)|^2 dx -\frac{s^{1/2}}{2} \int_\Omega |\partial_t w(T)|^2 dx \\
&+\frac{s^{1/2}}{2}\int_{\Omega}|\nabla w|^2(T)dx\\
&+\frac{s^{5/2}\lambda^2}{2} \int_{0}^T \int_\Omega |w|^2  \left( \varphi^2 \left( |\partial_t\psi|^2 - |\nabla\psi|^2 \right) \right)(T) dxdt\\
&+ \frac{s^{5/2}\lambda^2}{2} \int_{0}^T \int_\Omega |w|^2 \partial_t \left( \varphi^2 \left( |\partial_t\psi|^2 - |\nabla\psi|^2 \right) \right) dxdt.
\end{aligned}
\]
Gathering all the terms that have been computed, we get
\begin{equation}
\begin{aligned}
\int_{0}^T \int_\Omega P_1 w P_2 w \, dxdt
&= 2s\lambda \int_{-T}^T \int_\Omega \varphi |\partial_t w|^2 \partial_t^2\psi \, dxdt - \alpha s\lambda \int_{-T}^T \int_\Omega \varphi |\partial_t w|^2 (\partial_t^2\psi - \Delta\psi) dxdt \\
&\quad + 2s\lambda^2 \int_{-T}^T \int_\Omega \varphi \left( |\partial_t w|^2 |\partial_t\psi|^2 - 2\partial_t w \partial_t\psi \nabla w \cdot \nabla\psi + |\nabla\psi \cdot \nabla w|^2 \right) dxdt \\
&\quad + 4s\lambda \int_{-T}^T \int_\Omega \varphi |\nabla w|^2 dxdt + \alpha s\lambda \int_{-T}^T \int_\Omega \varphi |\nabla w|^2 (\partial_t^2\psi - \Delta\psi) dxdt \\
&\quad - s\lambda \int_{-T}^T \int_{\partial\Omega} \varphi |\partial_\nu w|^2 \nabla\psi \cdot \nu(x) \, d\sigma dt \\
&\quad + 2s^3\lambda^4 \int_{-T}^T \int_\Omega \varphi^3 |w|^2 \left( |\partial_t\psi|^2 - |\nabla\psi|^2 \right)^2 dxdt \\
&\quad + 2s^3\lambda^3 \int_{-T}^T \int_\Omega \varphi^3 |w|^2 \left( \partial_t^2\psi |\partial_t\psi|^2 + 2|\nabla\psi|^2 \right) dxdt \\
&\quad + \alpha s^3\lambda^3 \int_{-T}^T \int_\Omega \varphi^3 |w|^2 (\partial_t^2\psi - \Delta\psi) \left( |\partial_t\psi|^2 - |\nabla\psi|^2 \right) dxdt + X_1\\
&- \frac{s^2\lambda^2}{2} \int_{0}^T \int_\Omega |w|^2 \partial_t \left( \varphi^2 \left( |\partial_t^2\psi|^2 - |\nabla\psi|^2 \right) \right) dxdt\\
&+(\alpha - 1)s\lambda\int_{\Omega}\partial_t w w\varphi (\partial_t^2\psi - \Delta\psi) (T)dx-\frac{(\alpha - 1)s\lambda}{2}\int_{\Omega} |w|^2\partial_t\varphi (\partial_t^2\psi - \Delta\psi) (T)dx\\
&-s\lambda^2\int_\Omega \partial_t w  w\varphi(|\partial_t\psi|^2 - |\nabla\psi|^2)(T)dx+\frac{s\lambda^2}{2}\int_\Omega |w|^2\partial_t \left(\varphi(|\partial_t\psi|^2 - |\nabla\psi|^2)\right)(T)dx\\
&-s\lambda \int_\Omega |\partial_tw|^2\varphi\partial_t\psi(T)dx+2s\lambda \int_\Omega \partial_tw \varphi\nabla w\cdot\nabla\psi(T)dx\\
&-s\lambda \int_\Omega|\nabla w|^2\varphi\partial_t\psi(T)dx
-s^3\lambda^3\int_\Omega |w|^2\varphi^3 (|\partial_t\psi|^2 - |\nabla\psi|^2)\partial_t\psi(T)dx\\
&+\frac{s^{1/2}}{2} \int_\Omega |\partial_t w(0)|^2 dx -\frac{s^{1/2}}{2} \int_\Omega |\partial_t w(T)|^2 dx +\frac{s^{1/2}}{2}\int_{\Omega}|\nabla w|^2(T)dx\\
&-\frac{s^{5/2}\lambda^2}{2} \int_{0}^T \int_\Omega |w|^2  \left( \varphi^2 \left( |\partial_t^2\psi|^2 - |\nabla\psi|^2 \right) \right)(T) dxdt\\
\end{aligned}
\end{equation}
where $X_1$ gathers the non-dominating terms and satisfies
\[
|X_1| \leq M s \lambda^4 \int_{0}^{T} \int_{\Omega} \varphi |w|^2 \, dx dt.
\]
More details can be found in \cite{BBE13} and are omitted here. Consequently we have
\begin{equation}\label{pp}
\begin{aligned}
&\int_{0}^T \int_\Omega P_1 w P_2 w \, dxdt + 2s\lambda \int_{-T}^T \int_{\partial\Omega} \varphi |\partial_\nu w|^2 (x - x_0) \cdot \nu(x) \, d\sigma dt + |X_1| \\
&+M s\lambda \int_\Omega \varphi \left( |\partial_t w|^2 + |\nabla w|^2 \right)(T) dx + M s^3\lambda^3  \int_\Omega \varphi^3 |w|^2(T) dx\\
&\geq M s\lambda \int_{-T}^T \int_\Omega \varphi \left( |\partial_t w|^2 + |\nabla w|^2 \right) dxdt + M s^3\lambda^3 \int_{-T}^T \int_\Omega \varphi^3 |w|^2 dxdt+\frac{s^{1/2}}{2} \int_\Omega |\partial_t w(0)|^2 dx.
\end{aligned}
\end{equation}
Since 
\begin{equation}\label{pr}
\begin{aligned}
\int_{0}^T \int_\Omega |Pw - R_1w-R_2w|^2 dxdt
&\leq 3 \int_{0}^T \int_\Omega |Pw|^2 dxdt + 3 \int_{0}^T \int_\Omega |R_1w|^2 dxdt+ \int_{0}^T \int_\Omega |R_2w|^2 dxdt\\
&\leq M \int_{-T}^T \int_\Omega |Pw|^2 dxdt + M s^2\lambda^2 \int_{0}^T \int_\Omega \varphi^2 |w|^2 dxdt\\
&+Ms\int_{0}^T \int_\Omega  |\partial_tw|^2 dxdt,
\end{aligned}
\end{equation}
using (\ref{pp}) and (\ref{pr}), we get
\[
\begin{aligned}
&s\lambda \int_{0}^T \int_\Omega \left( |\partial_t w|^2 + |\nabla w|^2 \right) \varphi \, dxdt + s^3\lambda^3 \int_{0}^T \int_\Omega |w|^2 \varphi^3 \, dxdt + \frac{s^{1/2}}{2} \int_\Omega |\partial_t w(0)|^2 dx \\
&\leq M \int_{0}^T \int_\Omega |P w|^2 dxdt + M s\lambda \int_{-T}^T \int_{\Gamma_0} \varphi |\partial_\nu w|^2 (x - x_0) \cdot \nu(x) \, d\sigma dt \\
&\quad +M s\lambda \int_\Omega \varphi \left( |\partial_t w|^2 + |\nabla w|^2 \right)(T) dx + M s^3\lambda^3  \int_\Omega \varphi^3 |w|^2(T) dx.
\end{aligned}
\]
Thus the proof of Lemma \ref{Car} is complete.

\end{proof}

\section{Proof of Theorem \ref{Thm}}

Let \(z=u_{q_1}-u_{q_2}\)， we obtain
\begin{equation}\label{z}
\begin{cases}
\partial_t^2 z - \Delta z + q_1(x) z = (q_2-q_1)u_{q_2} & \text{in } \Omega \times (0, T), \\
z(0) = 0, \quad \partial_t z(0) = 0 & \text{in } \Omega,\\
z = 0 & \text{on } \partial\Omega \times (0, T). \\
\end{cases}
\end{equation}
Furthermore, set \(v=\partial_tz\), then \(v\) satisfies
\begin{equation}\label{v}
\begin{cases}
\partial_t^2 v - \Delta v + q_1(x) v = (q_2-q_1)\partial_t u_{q_2} & \text{in } \Omega \times (0, T), \\
v(0) = 0, \quad \partial_t v(0) = (q_2-q_1)u_0 & \text{in } \Omega,\\
v = 0 & \text{on } \partial\Omega \times (0, T). \\
\end{cases}
\end{equation}
Using Lemma \ref{Car}, we obtain
\begin{equation}
s^{1/2}\int_\Omega e^{2s\varphi(x,0)}|(q_2-q_1)u_0|^2dx\leq M \int_{0}^T \int_\Omega e^{2s\varphi} |(q_2-q_1)\partial_t u_{q_2}|^2 dxdt + M s \int_{0}^T \int_{\Gamma_0} e^{2s\varphi} |\partial_\nu v|^2 d\sigma dt.
\end{equation}
Since
\begin{equation}
|u_0(x)| \geq m_0 > 0, \quad x \in \Omega \quad \text{and} \quad \|u_{q_2}\|_{H^1(0,T;L^\infty(\mathsf{M}))} \leq M_0,
\end{equation}
we have
\begin{equation}\label{qq}
s^{1/2}\int_\Omega e^{2s\varphi(x,0)}|q_2-q_1|^2dx\leq  M \int_{0}^T \int_\Omega e^{2s\varphi} |q_2-q_1|^2 dxdt + M s \int_{0}^T \int_{\Gamma_0} e^{2s\varphi} |\partial_\nu v|^2 d\sigma dt.
\end{equation}
Therefore
\begin{equation}
\begin{aligned}
 \int_{0}^T \int_\Omega e^{2s\varphi(x,t)} |q_2-q_1|^2 dxdt &=\int_\Omega \Big(\int_{0}^T e^{2s(\varphi(x,t)-\varphi(x,0))}dt\Big) e^{2s\varphi} |q_2-q_1|^2 dx\\
 &=\int_\Omega(\int_{0}^T \Big(e^{-2s[e^{\lambda(|x-x_0|^2+\beta_0)}(1-e^{-\lambda \beta t^2})]}dt\Big) e^{2s\varphi(x,0)} |q_2-q_1|^2 dx
\end{aligned}
\end{equation}
Let 
\[k(s)=\int_{0}^T e^{-2s[e^{\lambda(|x-x_0|^2+\beta_0)}(1-e^{-\lambda \beta t^2})]}dt,\]
easily obtain
\[ k(s)=o(1) \ \text{as } s\to\infty. \]
Thus, choosing \(s>0\) sufficiently large, we absorb the first term on the right-hand side
of (\ref{qq}) into the left-hand side:
\begin{equation}
s^{1/2}\int_\Omega |q_2-q_1|^2dx\leq C  \int_{0}^T \int_{\Gamma_0} e^{2s\varphi} |\partial_\nu \partial_t(u_{q_1}-u_{q_2})|^2 d\sigma dt.
\end{equation}
This proves Theorem \ref{Thm}.

\section*{Acknowledgment}
The work of Suliang Si is supported by  the Shandong Provincial Natural Science Foundation (No. ZR2022QA111).

\end{document}